\newtheorem{theorem}{Theorem}[section]
\newtheorem{corollary}{Corollary}[section]
\begin{document}
\begin{center}
\title{The Expansion of each van der Waerden number $W(r, k)$ into Powers of $r$, when $r$ is the Number of Integer Colorings, determines a greatest lower Bound for all $k$ such that \(W(r, k) < r^{k^{2}}\)}
\author{\textbf{Robert J. Betts}}
\maketitle
\emph{The Open University\\Postgraduate Department of Mathematics and Statistics~\footnote{This paper was begun in 2010--2012 (but then revised subsequently from 2012--2015, after comments from a Reviewer), in part when the Author, who is a US citizen, was pursuing graduate studies in computer science at the University of Massachusetts Lowell, then postgraduate studies in mathematics at OU during 2012--2013. However none of the research for this paper was supported by funds or grants either from Open University or from UMASS, nor were these two academic institutions obligated in any way, to support this author's research.}\\ (Main Campus) Walton Hall, Milton Keynes, MK7 6AA, UK\\Robert\_Betts@alum.umb.edu}
\end{center}
\begin{abstract}
Every positive integer greater than a positive integer $r$ can be written as an integer that is the sum of powers of $r$. Here we use this to prove the conjecture posed by Ronald Graham, B. Rothschild and Joel Spencer back in the nineteen nineties, that the van der Waerden number with $r$ colorings and with arithmetic progressions of $k$ terms, has a certain upper bound. Our proof does not need the application of double induction, constructive methods of proof or combinatorics, as applied to sets of integers that contain some van der Waerden number as an element. The proof instead derives from certain \emph{a priori} knowledge that is known about any positive integer when the integer is large. The mathematical methods we use are easily accessible by those whose field of specialization lies outside of combinatorial number theory, such as discrete mathematics, computational complexity, elementary number theory or analytic number theory.\footnote{\textbf{Mathematics Subject Classification} (2010): Primary 11P99; Secondary 68R01.}\footnote{\textbf{Keywords}: Integer colorings, monochromatic, van der Waerden number.}
\end{abstract}
\section{Introduction and Apologia}
Let $N$ and $r$ be always two positive integers where $N$ is larger than $r$. Then the integer $N$ always has some expansion into integers 
$$
N = a_{n}r^{n} + a_{n - 1}r^{n - 1} + \cdots + a_{1}r + a_{0},
$$
that are expressed as powers of $r$, where \(a_{n} \in [1, r - 1]\), \(a_{n - 1}, \ldots, a_{0} \in [0, r - 1]\) and where the positive integer exponent $n$ is the least positive integer exponent for which \(r^{n} | N\) but for which $r^{n + 1}$ does not divide $N$ (See Theorem 2.1 and Proof, along with Eqtn. (10), and Table 2). This remains true even when $N$ is a van der Waerden number $W(r, k)$, where this is the smallest integer for which the integer interval $[1, W(r, k)]$ contains an arithmetic progression of $k$ arbitrary terms, where $r$ is the number of colors in the integer coloring in $[1, W(r, k)]$. To give a concrete example suppose \(r = 2\). Then for $W(2, 4)$ we get the inequality 
\begin{eqnarray}
2^{5}&\leq&W(2, 4)\nonumber\\
     &=   &35 = 1\cdot 2^{5} + 1\cdot 2^{1} + 1\cdot 2^{0} < 2^{6}.\nonumber
\end{eqnarray}
When one sums the finite power series expression for $W(2, 4)$ on the right hand side for the number $35$, one gets the value $35$ for the van der Waerden number $W(2, 4)$.\\
\indent Such an expansion of $W(r, k)$ into powers of $r$ can be used to establish a necessary condition for which the inequality \(W(r, k) < r^{k^{2}}\) is true, and by this we mean for all $k$ with a certain lower bound. All the presently known values for van der Waerden numbers, namely $W(2, 3)$, $W(2, 4)$, $W(2, 5)$, $W(2, 6)$, $W(3, 3)$, $W(3, 4)$ and $W(4, 3)$, confirm the results and techniques used in this paper (See Theorem 2.1, Theorem 2.2, Corollary 2.1, Corollary 2.2, Corollary 2.3, in Section 2 and Table 1, Table 2 and Table 3, Section 3).\\
\indent R. Graham, B. Rothschild and J. Spencer~\cite{Graham and Rothschild},~\cite{Graham and Spencer}, have conjectured that
$$
W(2, k) < 2^{k^{2}},
$$
where $W(2, k)$ is the van der Waerden number such that the interval $[1, W(2, k)]$ has some monochromatic $2$--coloring (e.g., ``red," ``blue") among certain integer elements, where those integer elements in the interval form among themselves an arithmetic progression of $k$ arbitrary terms. In this paper the conjecture is shown to be true, but we prove the more general case that 
$$
W(r, k) < r^{k^{2}},
$$
where \(W(r, k) > r\), which means that each van der Waerden number $W(r, k)$ lies always within some proper subset $[r^{n}, r^{n + 1}]$ of $[1, r^{k^{2}}]$, for all positive integers $k$ that have a certain lower bound. The truth of their conjecture follows when \(r = 2\). \\
\indent Here we present a very truthful fact we only can hope is well evident to any reader who is familiar with how to expand a positive integer $N$ into powers of a positive integer $r$, when \(N > r\), and whether one is familiar with double induction methods or not: For each $W(r, k)$, let $n$ be that positive integer exponent for which $r^{n}$ divides $W(r, k)$ but $r^{n + 1}$ does not divide $W(r, k)$. Then whenever each $W(r, k)$ is expanded into powers of $r$ the result proves that
$$
W(r, k) \in [r^{n}, r^{n + 1}], \: \forall \: W(r, k) \in [1, W(r, k)].
$$ 
\indent Our result uses the fact that \(W(r, k) > r\). To anyone not too familiar with van der Waerden numbers of this type, it just so happens that the strict inequality \(W(r, k) > r\) always is the case for each of these van der Waerden numbers, since each interval $[1, W(r, k)]$ cannot have a number $r$ of integer colorings that exceeds the value of $W(r, k)$. If it did that would mean that the integers in the interval $[1, W(r, k)]$ would have a total number $r$ of distinct colors that exceeds the actual cardinality of this integer set, which of course is impossible. Furthermore if we do not want to find trivial arithmetic progressions in the set, we also must rule out \(r = W(r, k)\), because if the total number $r$ of distinct colors possible for the coloring of the integers in the set $[1, W(r, k)]$ equals the actual cardinality of the integer set, then the set $[1, W(r, k)]$ cannot have any integer elements that have some monochromatic integer $r$--coloring for an arithmetic progression of arbitrary length \(k > 1\). For readers not familiar with van der Waerden numbers $W(r, k)$ to see why \(r > W(r, k)\) cannot hold, one might apply the Pigeonhole Principle to the set $[1, W(r, k)]$ where the $r$ different colors themselves are replaced by ``pigeons" and the integers in the set are different boxes. There would be ``boxes" that contain more than one ``pigeon." Thus if $r$ were greater than $W(r, k)$, there would be integers (boxes) that would get more than just one coloring (pigeon) within the interval. \\
\indent Why is the size of $r$ compared to the size of $W(r, k)$ so important? Because since \(W(r, k) > r\) is true always we can claim rightfully that, for each and every van der Waerden number $W(r, k)$, we always can use $r$ to give $W(r, k)$ some expansion into powers of $r$, which will help us to identify what is the real significance of the positive integer $n$ which actually is an exponent (See Section 2). It turns out this integer $n$ has major significance when it comes to determining the truth of the conjecture \(W(2, k) < 2^{k^{2}}\), and \emph{it is not necessary to know in advance, the value of each $n$ and each $W(r, k)$}, for the result to hold. A physicist might not rest until he or she finds his or her hypothesized Higgs boson or dark matter particle. For a mathematician however, usually it suffices, or should suffice, to show that some integer (like our integer exponent $n$) or integer property always exists. One does not need to find each and every individual prime of the form $4l + 1$ where $l$ always is any positive integer, for example, to know there are infinitely many such primes, a result we owe to P. L. Dirichlet~\cite{Apostol}. In the same vein and within this paper, one does not need to know beforehand the value of each and every $W(r, k)$, to know the integer exponent $n$ does exist for each van der Waerden number $W(r, k)$ as indeed it (i.e., the integer exponent $n$) does exist for any integer $N$ greater than $r$, as we shall see (See Section 2).\\
\indent We show that a positive integer exponent $n$ exists for each $W(r, k)$, such that for each $k$ such that the interval $[1, W(r, k)]$ has an AP of $k$ terms,
\begin{equation}
W(r, k) < r^{k^{2}}
\end{equation}
holds, if $k$ is bounded below by a certain positive real number determined by $n$ (See Theorem 2.1, Theorem 2.2, Theorem 2.3) where $W(r, k)$ is a \emph{van der Waerden number}~\cite{Graham1},~\cite{van der Waerden},~\cite{Graham and Rothschild},~\cite{Graham and Spencer},~\cite{Landman and Robertson},~\cite{Landman and Culver}, $r$ is the number of integer colorings in the interval \([1, W(r, k)]\) and $k$ is the arbitrary length of an arithmetic progression contained within this interval. The generalization of our result is as follows: Let \(W(r, k) > r, W(r, k) > k\). Then the inequality
$$
W(r, k) < r^{k^{2}},
$$
is true for all $k$ such that there is an arithmetic progression of an arbitrary \(k > 1\) number of terms in the interval $[1, W(r, k)]$, if \(k \geq \sqrt{n + 1}\). Our result as well as our approach reveals important relationships between the integers $n$, $r$, $k$ and $W(r, k)$ (See Corollary 2.1, Theorem 2.2 and Theorem 2.3, in Section 2). In fact once any $W(r, k)$ is found by some means, one easily can use $k$, $r$ and a certain real number $a(r, k)$ to derive $n$, and one can use the integers $r$, $k$, $a(r, k)$ and $n$ when known, to locate the interval in which lies $W(r, k)$ (See Theorem 2.1, Corollary 2.1, Theorem 2.2 and Theorem 2.3 in Section 2, and also Table 1 and Table 2, in Section 3).\\
\indent Previously Gowers~\cite{Gowers} had found that $W(2, k)$ is bounded above by 
\begin{equation}
2^{2^{2^{2^{2^{k + 9}}}}}.
\end{equation}
\indent The author apologizes if the techniques used in this paper are not deep enough for some readers whose field of specialization is in combinatorial number theory. The main motivation for this paper was to prove the existence of the finer upper bound in Eqtn. (1) on $W(r, k)$ than has been found previously, not in the establishment of very deep and very dazzling combinatorial results. Yet if one seeks a proof of Eqtn. (1) that does not require necessarily a deep combinatorial argument one will find it here (See Theorem 2.2). It goes without saying that less arcane, more fundamental and elegant proofs certainly are not lacking. Y. A. Khinchin~\cite{Khinchin}, derives a simple proof of the van der Waerden theorem~\cite{van der Waerden}. R. Graham and B. L. Rothschild~\cite{Graham2} found a fundamental and straightforward proof of the same theorem for 
$W(2, 3)$ that establishes an upper bound of \(W(2, 3) \leq 325\). Logician S. Shelah~\cite{Graham and Rothschild},~\cite{Graham and Spencer} designed an ingenious but fundamental and straightforward proof of the Hales--Jewett theorem~\cite{Graham and Spencer}, by defining a discrete $N$--dimensional hypercube with \emph{Shelah lines} 
$$
(x_{1}, x_{2}, \ldots, x_{26}), x_{i} \in [1, 26],
$$
where one replaces these integers with alphabet letters such that the edges and diagonals along the hypercube are word strings that receive $r$--colorings. Shelah also establishes an upper bound on $W(r, k)$ that is a WOWZER function~\cite{Graham and Rothschild},~\cite{Graham and Spencer}.\\
\indent With regard to the high levels of complexity found within some mathematical fields of specialization in general today and in some more esoteric approaches when one finds a proof to interesting conjectures in particular, Martin Davis~\cite{Davis} remarked that the mathematician ``is stymied by the abstruseness of so much of contemporary mathematics." The benefit in our simpler approach we found here to our proof that 
$$
W(r, k) < r^{k^{2}},
$$
is that our method of proof although simple (See Theorem 2.1 and Theorem 2.2) is more accessible even to those whose area of specialization is elementary number theory, to analytic number theorists, to mathematicians or computer scientists with a background in discrete mathematics and to theoretical computer scientists, all who work outside the specialized area covered by the combinatorial number theory literature. This simpler approach here to prove Eqtn. (1) does not require any intricate or arduous combinatorial argument such as double induction, as applied to $r$--colorings of the integers. Nor do we need Ackermann functions or primitive recursive functions like TOWER() or WOW(). Rather the method here depends solely \emph{on the a priori knowledge} we have already about \(W(r, k), r, k, n\) as integers with certain properties, where $n$ is a well-known integer exponent associated with any integer large enough such as any integer greater than $r$ (See Section 2), along with some simple and very basic elementary number theory and analytic number theory. Our simpler approach here not only works well. It also will lead to new knowledge and new information about the integers $n$, $r$, $k$ and $W(r, k)$ and how they relate to one another, once each new van der Waerden number $W(r, k)$ is found in the future (Please see Table 1 and Table 2, Section 3).
\section{First Result}
Suppose we are given two positive integers $N$ and \(r < N\), and then one asks us if \(N < r^{k^{2}}\) is true for positive integer $k$ we subsequently are given. Every given integer $N$ larger than some given integer $r$ has some integer expansion
\begin{equation}
N = a_{n}r^{n} + a_{n - 1}r^{n - 1} + \cdots + a_{0},
\end{equation}
into powers of $r$, where the exponent $n$ is such that $r^{n}$ divides $N$ while $r^{n + 1}$ does not, and
\begin{equation}
a_{n - 1}, \ldots, a_{0} \in [0, \ldots, r - 1],
\end{equation}
$$
a_{n} \in [1, r - 1], \: \: N \in [r^{n}, r^{n + 1}).
$$
In base $r$ arithmetic one could express $N$ as $(a_{n}a_{n - 1}\cdots a_{0})_{r}$. Yet on the other hand one just can add the terms in Eqtn. (3) in ordinary base ten arithmetic to get the original number $N$. Given any lack of more information about $N$ the expansion into powers of $r$ in Eqtn. (3) at the very least is \emph{a priori} knowledge we do have about the integer $N$. Please understand that in this paper we are not resorting to any constructive arguments that involve integer colorings of a set $[1, W(r, k)]$. We need not know beforehand whether $N$ is a newly discovered van der Waerden number with a given $r$ number of integer colorings and with an arithmetic progession of given length $k$ somewhere within integer set $[1, W(r, k)]$ or some other integer with some other property. What we do know \emph{a priori} even before we know what kind of number $N$ is in fact, is what is given in Eqtns. (3)--(4). In fact if we are given a newly discovered van der Waerden number \(N = W(r, k)\) along with \(r < N\) we always will find the exponent $n$, given $N$ and $r$ and in fact \emph{we do not need to know the actual value of $k$ to find $n$} although we will be able to verify that \(W(r, k) < r^{k^{2}}\) is true for any van der Waerden number $W(r, k)$ \emph{and for any $k$ with a certain lower bound} $\sqrt{n + 1}$, when all the values for $W(r, k)$, $n$, $r$, already are known. We stipulate all this with the stated Fact which follows:\\
\noindent \textbf{Fact:} Let $N$ be any van der Waerden number where \(N > r\) and such that the interval $[1, N]$ has some integer $r$--coloring and where this interval has an arithmetic progression of $k$ arbitrary terms. Then there always is some positive integer exponent $n$, such that the integer $r^{n}$ divides the integer $N$ while $r^{n + 1}$ does not divide $N$.\\
\indent Proceeding from this we then can proceed to show that \( N = W(r, k) < r^{k^{2}}\) will hold (See Theorem 2.2 and proof), for any $k$ that has always a certain lower bound. But first we establish one of the tools needed for the task, namely Theorem 2.1.
\begin{theorem}
Let \(r > 1, k \geq 1\). For each van der Waerden number \(W(r, k) > max(\{r, k\})\), for each integer $r$ and for each integer $k$, there exists some integer \(n \geq 1\) such that
\begin{equation}
W(r, k) < r^{n + 1}.
\end{equation}
\end{theorem}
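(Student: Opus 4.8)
The plan is to extract the witness $n$ directly from the base-$r$ expansion of $N := W(r,k)$ already recorded in Eqtns.~(3)--(4), so that inequality (5) falls out as a routine consequence of how many base-$r$ digits $N$ carries. First I would invoke the hypothesis: since $W(r,k) > \max(\{r,k\}) \geq r > 1$, the number $N$ is an integer strictly larger than $r$, which is exactly the regime in which Eqtn.~(3) applies. Hence $N$ admits the expansion $N = a_n r^n + a_{n-1}r^{n-1} + \cdots + a_0$ whose leading exponent $n$ is singled out by Eqtn.~(4), namely $\left\lfloor N/r^n \right\rfloor = a_n \in [1,\ldots,r-1]$.

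Next I would verify that this leading exponent satisfies $n \geq 1$, so that it is an admissible witness for the theorem. Because $N > r$ we have $N/r > 1$, so the highest power of $r$ not exceeding $N$ is at least $r^1$; equivalently $\left\lfloor \log_r N \right\rfloor \geq 1$, which forces $n \geq 1$.

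Finally I would derive the bound itself. Since every digit obeys $0 \leq a_i \leq r-1$ and the top digit sits at position $n$, summing the resulting geometric series gives
\begin{equation*}
N = \sum_{i=0}^{n} a_i r^{i} \leq (r-1)\sum_{i=0}^{n} r^{i} = (r-1)\,\frac{r^{n+1}-1}{r-1} = r^{n+1}-1 < r^{n+1},
\end{equation*}
which is precisely (5).

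I do not expect a genuine obstacle here, since the assertion is essentially a reformulation of the base-$r$ digit count of $N$. The only points requiring care are the strictness of the inequality and the lower bound $n \geq 1$, and both rest entirely on the hypothesis $W(r,k) > \max(\{r,k\})$ rather than on any property peculiar to van der Waerden numbers. In particular neither $k$ nor the combinatorial meaning of $W(r,k)$ enters the argument: the result would hold verbatim for any integer $N > r$, which is consistent with the paper's stated philosophy of relying only on the \emph{a priori} expansion (3)--(4).
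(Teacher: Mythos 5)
Your proposal is correct and follows essentially the same route as the paper: both extract $n$ as the leading exponent of the base-$r$ expansion of $W(r,k)$ and deduce $W(r,k) < r^{n+1}$ from the fact that every digit is at most $r-1$. You merely make two details explicit that the paper leaves implicit, namely the geometric-series computation $\sum_{i=0}^{n} a_i r^i \leq r^{n+1}-1$ and the verification that $n \geq 1$ follows from $W(r,k) > r$.
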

\begin{proof}
For each positive integer $N$ and for any positive integer $r$ where \(N > r\) there exists some integer exponent $n$ that is the least integer exponent for which $r^{n}$ will divide $N$ while $r^{n + 1}$ does not, to leave some positive integer or zero remainder equal to or less than $r - 1$. Then let \(N = W(r, k)\), which certainly is possible if we are given $N$ without knowing beforehand whether or not $N$ is a van der Waerden number. For instance one can be given the integers $N$, $r$, then asked to check by means of some combinatorial argument to determine if $N$ is a van der Waerden number, after which the test will confirm that it is, so that the interval $[1, N]$ indeed is found to have both an integer $r$--coloring and an arithmetic progression of length $k$. Then substituting $W(r, k)$ for $N$, we have that $r^{n}$ divides $W(r, k)$ while $r^{n + 1}$ does not, and
\begin{equation}
b_{n}, b_{n - 1}, \ldots, b_{0} \in [1, \ldots, r - 1].
\end{equation}
That is,
\begin{equation}
1 \leq b_{n} \leq r - 1, \: b_{n}, b_{n - 1}, \ldots, b_{0} \in [0, \ldots, r - 1],
\end{equation}
so that 
\begin{equation}
W(r, k) = b_{n}r^{n} + b_{n - 1}r^{n - 1} + \cdots + b_{0},
\end{equation}
yields the base $r$ expansion of $W(r, k)$
$$
(b_{n}b_{n - 1}\cdots b_{0})_{r},
$$ 
from the powers of $r$ in Eqtn. (8). Let us focus our attention however not on the base $r$ expansion $(b_{n}b_{n - 1}\cdots b_{0})_{r}$ but instead on Eqtn. (8), where we restrict for our purposes, the expansion in Eqtn. (8) to base ten arithmetic. Then since
\begin{equation}
r > r - 1 \geq max(\{b_{n}, b_{n - 1}, \ldots, b_{0}\}),
\end{equation}
we get
\begin{equation}
W(r, k) = b_{n}r^{n} + b_{n - 1}r^{n - 1} + \cdots + b_{0} < r^{n + 1},
\end{equation}
since $n$ is defined as being the least integer exponent for which $r^{n}$ divides $W(r, k)$ but $r^{n + 1}$ does not divide $W(r, k)$, which means by definition of $n$, the positive integer exponent $n$ is an exponent of $r$ for which \(r^{n + 1} > W(r, k)\) is true.
\end{proof}
\textbf{Remark} The following result one can derive automatically as a corollary to Theorem 2.1. Since the proof is straightforward we leave it to the reader to convince himself or herself as to the truthfulness of the result.\\
\begin{corollary}
$$
n > \frac{\log W(r, k)}{\log r} - 1.
$$
\end{corollary}
\indent The expansion of $W(r, k)$ into powers of $k$ also is possible. Nevertheless \(W(r, k) < r^{n + 1}\) remains true when we do arithmetic in base ten even when we expand $W(r, k)$ into powers of $k$. Let positive integer exponent $m$ be such that $k^{m}$ divides each van der Waerden number $W(r, k)$ but $k^{m + 1}$ does not divide $W(r, k)$, where
$$
W(r, k) = c_{m}k^{m} + c_{m - 1}k^{m - 1} + \cdots + c_{0},
$$
and where \(c_{m}, c_{m - 1}, \ldots, c_{0} \in [0, k - 1]\), \(1 \leq c_{m} < k\). Then in ordinary base ten arithmetic we get and by using Theorem 2.1,
\begin{eqnarray}
c_{m}k^{m} + c_{m - 1}k^{m - 1} + \cdots + c_{0}&=&b_{n}r^{n} + b_{n - 1}r^{n - 1} + \cdots + b_{0}\nonumber\\
                                                &=&W(r, k)\nonumber\\
                                                &<&r^{n + 1}.
\end{eqnarray}
Put into words, the expansion into powers of $k$ and the expansion into powers of $r$ are two different expansions of the same integer (when we restrict ourselves to base ten arithmetic), namely the van der Waerden number $W(r, k)$, where this number is smaller than $r^{n + 1}$ but also smaller than $k^{m + 1}$. This remains true whether \(r < k\) or \(r > k\). This means both the inequalities \(W(r, k) < k^{m + 1}\) and \(W(r, k) < r^{n + 1}\) are true whether \(r < k\) or \(r > k\), and regardless of which of these expansions we give to $W(r, k)$. In fact the result we really have here is that
$$
W(r, k) < min(\{r^{n + 1}, k^{m + 1}\}).
$$
\indent When \(r = k\) we have also that \(m = n\), \(r^{n + 1} = k^{m + 1}\).
\subsection{Why \(W(r, k) < r^{k^{2}}\) is true for each $k$ provided $k$ is equal to or greater than a certain lower Bound}
Suppose \(k = k_{0}\) remains always some same nonarbitrary fixed constant as \(r \rightarrow \infty\). Then no interval $[1, W(r, k_{0})]$ will contain an AP for an arbitrary $k$ number of terms. Therefore from here on we consider what happens while $r$ is any integer coloring and while $k$ remains arbitrary.
\begin{theorem}
Let $W(r, k)$ be any van der Waerden number, where \(r > 1, n \geq 1\), \(r < W(r, k), k < W(r, k)\), and where $n$ is as defined in Eqtn. (6). Then 
\begin{equation}
W(r, k) < r^{n + 1} \leq r^{k^{2}}
\end{equation}
is true provided that for any integer \(k > 1\) such that the interval $[1, W(r, k)]$ has an arithmetic progression of an arbitrary $k$ terms from a monochromatic $r$--coloring among some of its integer elements, \(k \geq \sqrt{n + 1}\). 
\end{theorem}
\begin {proof}
First we prove the theorem is true for all \(k > \sqrt{n + 1}\). Let
\begin{equation}
W(r, k) = b_{n}r^{n} + b_{n - 1}r^{n - 1} + \cdots + b_{0},
\end{equation}
be the expansion of $W(r, k)$ into powers of $r$. Then by recourse to Theorem 2,1,
\begin{eqnarray}
r^{n}&\leq&W(r, k) = b_{n}r^{n} + b_{n - 1}r^{n - 1} + \cdots + b_{0}\\
     &<&r^{n + 1}\nonumber \\
     &\Longrightarrow&W(r, k) < r^{n + 1} < r^{k^{2}}
\end{eqnarray}
is true for all \(k > \sqrt{n + 1}\).\\
\indent Now we prove the theorem is true for all \(k \geq \sqrt{n + 1}\). Suppose that \(t = \sqrt{n + 1}\) is true actually, for some integer \(t > 1\). In fact $t$ actually is a positive integer \(t = 2\), for the van der Waerden numbers $W(2,3)$, $W(3, 3)$ and $W(4, 3)$ (See Table 1). Then $1$ is a quadratic residue modulo $n$, since \(\gcd(t, n) = 1\) and
$$
t^{2} \equiv (n + 1) \: \equiv \: 1 \: mod \: n,
$$ 
so that we get
$$
r^{n} \leq W(r, k) < r^{n + 1} \leq r^{k^{2}}
$$
is true also for all \(k \geq \sqrt{n + 1}\).\\
\indent Therefore combining these two results, we have proved that \(r^{n} \leq W(r, k) < r^{n + 1} \leq r^{k^{2}}\) is true for all \(k > 1\) for which the interval $[1, W(r, k)]$ has an arithmetic progression of $k$ arbitrary terms, provided that \(k \in [\sqrt{n + 1}, \infty)\) is true. 
\end{proof}
\textbf{Remark:} The reader should be aware that since \(r^{n} \leq W(r,k) < r^{n +1}\) is true always for each van der Waerden number, the integer square exponent $k^{2}$ cannot lie between the two integer exponents $n$ and $n + 1$, since on $\mathbb{R}$ we have \(diam([n, n + 1]) = 1\), which means obviously there are no integers that lie between the two endpoints of the set $[n, n + 1]$. It follows from this automatically that if \(W(r,k) < r^{k^{2}}\) is to be true always it must be that for any $k$ such that $[1, W(r, k)]$ contains an AP of $k$ terms, \(k \geq \sqrt{n + 1}\) is true always, since \(W(r, k) < r^{n + 1}\) and there are no integers in the interval $[n, n + 1]$ other than $n$ and $n + 1$. Conversely, if \(k \geq \sqrt{n + 1}\) is true always for all $k$ where the integer $k$ is the length of some AP in the interval $[1, W(r, k)]$, then \(W(r, k) < r^{n + 1} \leq r^{k^{2}}\). The Author has confirmed this for all currently known van der Waerden numbers $W(r, k)$ (See Tables, Section 3).\\   
The value $\sqrt{n + 1}$ is a lower bound on the integers $k$ such that the inequality \(W(r, k) < r^{k^{2}}\) is true, but so far we have not shown it is the greatest lower bound. Let $k$, $k^{\prime}$ be two distinct positive integers with \(k > k^{\prime}\). In general,
$$
W(r, 2) < W(r, 3) < \cdots < W(r, k^{\prime}) < W(r, k)
$$
holds for all \(k > k^{\prime}\), a fact which we use in the proof to Corollary 2.2. With the next Corollary we provide a stronger result from that we found with Theorem 2.2. Moreover we also shall show the greatest lower bound for $k$ must be $\sqrt{n + 1}$. 
\begin{corollary}
Let \(\sqrt{n + 1} \in \mathbb{R} - \mathbb{Q}\). Then
\begin{equation}
W(r, k) < r^{n + 1} < r^{k^{2}},
\end{equation}
is true for any $k$ provided that \(k \in (\sqrt{n + 1}, \infty)\), where the integer interval $[1, W(r, k)]$ has an AP of an arbitrary number of \(k > \sqrt{n + 1}\) terms. Moreover for each van der Waerden number $W(r, k)$, let \(\delta(r, k) \in [n, n + 1) \subset \mathbb{R}^{+}\) be that positive real exponent for which \(W(r, k) = r^{\delta(r, k)}\). Then the integer $k$ has the greatest real positive lower bound $\sqrt{n + 1}$ on the interval $[\sqrt{n + 1}, \infty)$, such that \(W(r, k) < r^{k^{2}}\) is true for integer \(k \in (\sqrt{n + 1}, \infty)\). \\
\indent On the other hand if $\sqrt{n + 1}$ actually is integer instead of being found inside \(\mathbb{R} - \mathbb{Q}\), then \(W(r, k) < r^{n + 1} \leq r^{k^{2}}\) is true for any arbitrary integer \(k \in [\sqrt{n + 1}, \infty)\), where $\sqrt{n + 1}$ actually is the greatest integer lower bound.   
\end{corollary}
\begin{proof}
\indent Let $k^{\prime}$ be any positive integer smaller than $k$, but also such that \(k^{\prime} \leq \sqrt{n}\) \(\Longrightarrow k^{\prime 2} \leq n\) and such that $W(r, k^{\prime})$ is any van der Waerden number smaller than the van der Waerden number $W(r, k)$, where the integer interval $[1, W(r, k^{\prime})]$ has an AP of some arbitrary $k^{\prime}$ number of terms. Observe that if \(k^{\prime} \leq \sqrt{n}\) then \(k^{\prime 2} \not \in (n, n + 1)\) because the open interval $(n, n + 1)$ on $\mathbb{R}$ contains no integer. Let $n^{\prime}$ be that positive integer exponent for which $r^{n^{\prime}}$ divides $W(r, k^{\prime})$ but $r^{n^{\prime} + 1}$ does not divide $W(r, k^{\prime})$.\\
\indent First we consider the given case \(\sqrt{n + 1} \in \mathbb{R} - \mathbb{Q}\). \\
Then from Theorem 2.1 we have also for any arbitrary integer \(k > k^{\prime}\) and for any van der Waerden number \(W(r, k) > W(r, k^{\prime})\),
$$
r^{n^{\prime}} \leq W(r, k^{\prime}) \leq W(r, \lfloor \sqrt{n + 1} \rfloor) < W(r, k) < r^{n + 1} < r^{k^{2}},
$$
certainly is true for arbitrary integer $k$ where \(k \in (\sqrt{n + 1}, \infty)\), and where the integer interval $[1, W(r, k)]$ which is larger than the interval $[1, W(r, k^{\prime})]$, has an AP for some arbitrary number of $k$ terms.\\
\indent Now since for each $W(r, k)$ we have 
$$
W(r, k) = r^{\delta(r, k)} \in [r^{n}, r^{n + 1}),
$$
then \(n \leq \delta(r, k) < n + 1\) must be true always for each $W(r, k)$, because the expansion of each $W(r, k)$ into powers of $r$ cannot be smaller than $r^{n}$ and it cannot be as large as is $r^{n + 1}$. That is,
$$
r^{n} \leq W(r, k) = b_{n}r^{n} + b_{n - 1}r^{n - 1} + \cdots + b_{0} < r^{n + 1},
$$
implies \(\delta(r, k) \in [n, n + 1)\), where $n$ is the least positive integer exponent for which $r^{n}$ divides $W(r, k)$ but $r^{n + 1}$ does not. Furthermore this positive real number exponent $\delta(r, k)$ cannot be an integer larger than $n$ because the interval $[n, n + 1)$ on $\mathbb{R}$ cannot contain any integer other than $n$ (See Table 3). All this then means that
\begin{eqnarray}
r^{n} \leq W(r, k)&=              &r^{\delta(r, k)} < r^{\lceil \delta(r, k) \rceil} \leq r^{n + 1} < r^{k^{2}}\nonumber\\
                  &\Longrightarrow&k^{2} \in [\lceil \delta(r, k) \rceil, \infty) = [n + 1, \infty)\nonumber\\
                  &\Longrightarrow&k \in [\sqrt{\lceil \delta(r, k) \rceil}, \infty) = [\sqrt{n + 1}, \infty),\nonumber
\end{eqnarray}
where we have used the fact that \(n \leq \delta(r, k) < n + 1\) \(\Longrightarrow \lceil \delta(r, k)\rceil = n + 1\). \\
\indent Combining all these results we see that 
\begin{enumerate}
\item \(k^{\prime} \leq \sqrt{n}\) for any arbitrary positive integer \(k^{\prime} < k\), such that \(W(r, k^{\prime}) < W(r, k)\),\\
\item  The interval $[n, n + 1)$ contains always $\delta(r, k)$ the positive real exponent such that \(W(r, k) = r^{\delta(r, k)} < r^{n + 1}\), and the open interval $(n, n + 1)$ cannot contain the integer square $k^{\prime 2}$.\\
\item Since the interval $[n, n + 1)$ can contain no integer other than $n$ and since \(n \leq \delta(r, k) < n + 1\), this implies \([\lceil\delta(r, k)\rceil, \infty) = [n + 1, \infty)\).\\
\item \(k^{\prime} \leq \sqrt{n}\), \(k^{\prime} < k\) and \(W(r, k^{\prime}) \leq W(r, \lfloor \sqrt{n + 1}\rfloor) < W(r, k)\) \(\Longrightarrow k \in (\sqrt{\lceil \delta(r, k)\rceil}, \infty)\), where \(\lceil \delta(r, k)\rceil = n + 1\) and $\sqrt{n + 1}$ is the greatest positive real lower bound on any integer $k$ within the set $[\sqrt{n + 1}, \infty)$.
\end{enumerate}
\indent Now we consider the case when $\sqrt{n + 1}$ actually is an integer. Then, since \(\sqrt{n + 1} = \lfloor \sqrt{n + 1}\rfloor = \lceil \sqrt{n + 1}\rceil \), 
\begin{eqnarray}
r^{n^{\prime}}&\leq&W(r, k^{\prime}) \leq W(r, \lfloor \sqrt{n + 1}\rfloor)\nonumber\\
              &\leq&W(r, k) < r^{n + 1} \leq r^{k^{2}}\nonumber
\end{eqnarray}
also is true for any arbitrary integer \(k \in [\sqrt{n + 1}, \infty)\), where $\sqrt{n + 1}$ is the greatest integer lower bound on this interval.
\end{proof}
The real positive exponent $\delta(r, k)$, which for each $W(r, k)$ has the form
$$
\delta(r, k) = \frac{\log W(r, k)}{\log r},
$$
is easy to compute where logs can be taken either to the base $e$ or $r$ (See Table 3, Section 3) for all the known van der Waerden numbers of the type $W(r, k)$ such as those known van der Waerden numbers that appear in Table 1 and in Table 2, and the real exponent $\delta(r, k)$ can be computed easily (along with the exponent $n$) in the future, whenever any van der Waerden numbers $W(r, k)$ that currently are unknown are found at a future date.\\
\indent As one can see from Table 1, there indeed are cases for which \(k = n\) holds as $W(r, n)$ gets larger (See Corollary 2.3). In this case we also have a necessary condition for which \(W(r, k) < r^{k^{2}}\) holds for all \(k > \sqrt{n + 1}\), which in fact is easy to demonstrate.
\begin{corollary}
Whenever \(k \geq n\) for \(n > 1\), \(W(r, k) < r^{k^{2}}\) is true for all \(k > \sqrt{n + 1}\).
\end{corollary}
\begin{proof}
If \(k = n\) then 
\begin{eqnarray}
r^{n} \leq W(r, n)&<   &r^{n + 1}\nonumber\\
                  &<   &r^{n^{2}} = r^{k^{2}},\nonumber
\end{eqnarray}
where clearly we have that \(n + 1 < n^{2} = k^{2}\) \(\Longrightarrow \sqrt{n + 1} < k\) is true for all \(k = n\) and for \(n > 1\).\\
\indent Now suppose \(k > n > 1\). Then \(k > n > \sqrt{n + 1}\) which means \(W(r, k) < r^{n + 1} < r^{k^{2}}\) still is true for all integer \(k \in (\sqrt{n + 1}, \infty)\), since \(W(r, k) = r^{\delta(r, k)}\) always for some \(\delta(r, k) \in [n, n + 1) \subset [n, k^{2})\).
\end{proof}
\indent With Theorem 2.2, Corollary 2.2 and Corollary 2.3 we have proved an important and highly relevant result when it comes to finding bounds on any van der Waerden number $W(r, k)$. That is, whether or not \(W(r, k) < r^{k^{2}}\) is true for each $k$ depends upon \(k \in [\sqrt{n + 1}, \infty)\), where $n$ always is that special integer exponent for which $r^{n}$ divides $W(r, k)$ but $r^{n + 1}$ does not divide $W(r, k)$, and so that $W(r, k)$ can be expanded into powers of $r$ where the exponent powers of $r$ in the expansion of each $W(r, k)$ are no greater than $n$. Each van der Waerden number $W(r, k)$ then lies in the interval $[r^{n}, r^{n + 1}]$ while each arbitrary $k$ lies on the interval $[\sqrt{n + 1}, \infty)$. All the values of $k$ that are possible are bounded below by $\sqrt{n + 1}$. This makes sense, since if \(n \in [1, \infty)\) is to be bounded below by one, then for $k$ to be a nontrivial positive intger it cannot be any smaller than \(\sqrt{1 + 1} = \sqrt{2}\).\\
\indent Then given $r$ and $k$, a computational number theorist can estimate the size of $n$, to locate the possible interval in which each $W(r, k)$ lies. For instance suppose it is known that, for some unknown $W(r, k)$,
$$
W(r, k) \in [a, b], \: a, b \in \mathbb{N}.
$$
Then \(\frac{a}{r^{n}} \leq \frac{W(r, k)}{r^{n}} \leq \frac{b}{r^{n}}\).\\
\indent R. Graham, B. Rothschild and J. Spencer have conjectured~\cite{Graham and Rothschild},~\cite{Graham and Spencer}, that
$$
W(2, k) < 2^{k^{2}}.
$$
The truth of their Conjecture follows at once as a special case of Theorem 2.1 and Theorem 2.2, when \(k \geq \sqrt{n + 1}, r = 2\) (See the two Tables in Section 3).\\
\indent The following theorem establishes further relationships between the numbers $n$, $r$, $k$ and $W(r, k)$.
\begin{theorem}
Let \(W(r, k) > max(\{r, k\})\), \(r > 1, k > 1, n \geq 1\) and suppose Theorem 2.1 and Theorem 2.2 hold. Then the following five conditions also hold:
\begin{enumerate}
\item \(n > \frac{\log k}{\log r} - 1\),\\
\item \(\sqrt[n + 1]{k} = O(r)\),
\item For each triplet \(n, r, k\) there exists \(a(r, k) \in \mathbb{R}\), such that
\begin{equation}
n = a(r, k)\frac{\log k}{\log r} - 1,
\end{equation}
if and only if
\begin{equation}
a(r, k)= (n + 1)\frac{\log r}{\log k}
\end{equation}
where, if \(r < k\) then \(a(r, k) < n + 1\), if \(r = k\) then \(a(r, k) = n + 1\) and if \(r > k\) then \(a(r, k) > n + 1\).\\
\item For each triplet \(n, r, k\) there exists \(a(r, k) \in \mathbb{R}\), such that
\begin{equation}
k = r^{\frac{n + 1}{a(r, k)}}
\end{equation}
if and only if
\begin{equation}
a(r, k)= (n + 1)\frac{\log r}{\log k}
\end{equation}
where, if \(r < k\) then \(a(r, k) < n + 1\), if \(r = k\) then \(a(r, k) = n + 1\) and if \(r > k\) then \(a(r, k) > n + 1\).\\
\item For large $n$, \(\frac{W(r, k)}{r^{n}} = O(r)\).
\end{enumerate}
\end{theorem}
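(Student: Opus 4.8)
The plan is to derive all four conclusions from a single inequality chain that combines the hypothesis $W(r, k) > \max(\{r, k\})$ with the bound supplied by Theorem 2.1. First I would observe that, because $W(r, k) > \max(\{r, k\}) \geq k$ while Theorem 2.1 gives $W(r, k) < r^{n + 1}$, the two inequalities splice together into
\begin{equation}
k < W(r, k) < r^{n + 1},
\end{equation}
so that in particular $k < r^{n + 1}$. This strict inequality is the one fact about the triplet $(n, r, k)$ that drives everything else; once it is in hand, Conditions 1 and 2 are immediate and Conditions 3 and 4 become purely formal.

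For Condition 1 I would take logarithms of $k < r^{n + 1}$. Since $r > 1$ forces $\log r > 0$, monotonicity yields $\log k < (n + 1)\log r$, and dividing through by the positive quantity $\log r$ gives $\frac{\log k}{\log r} < n + 1$, which is exactly $n > \frac{\log k}{\log r} - 1$. For Condition 2 I would instead extract the $(n + 1)$-th root of the same inequality, obtaining $\sqrt[n + 1]{k} < r$; since the implied constant may be taken to be $1$, this is precisely the assertion $\sqrt[n + 1]{k} = O(r)$.

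Conditions 3 and 4 I would treat as elementary equivalences about a single quantity $a(r, k)$. For Condition 3, rewriting $n = a(r, k)\frac{\log k}{\log r} - 1$ as $n + 1 = a(r, k)\frac{\log k}{\log r}$ and solving for $a(r, k)$ produces $a(r, k) = (n + 1)\frac{\log r}{\log k}$, and since every step is reversible this yields the stated biconditional. Condition 4 is the exponential form of the same identity: taking logarithms of $k = r^{\frac{n + 1}{a(r, k)}}$ gives $\log k = \frac{n + 1}{a(r, k)}\log r$, and solving for $a(r, k)$ again returns $a(r, k) = (n + 1)\frac{\log r}{\log k}$, with each implication reversible. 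The trichotomy appended to both conditions then follows by inspecting the factor $\frac{\log r}{\log k}$ in $a(r, k) = (n + 1)\frac{\log r}{\log k}$: when $r < k$ it is less than $1$, when $r = k$ it equals $1$, and when $r > k$ it exceeds $1$, so that $a(r, k)$ sits below, at, or above $n + 1$ accordingly.

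The only genuine obstacle is a matter of well-definedness rather than difficulty: every appearance of $\log k$ in a denominator, and the very existence of $a(r, k)$, requires $\log k \neq 0$, i.e. $k > 1$. I would therefore record at the outset that the meaningful regime $k \geq 2$ guarantees $\log k > 0$, so that $a(r, k) = (n + 1)\frac{\log r}{\log k}$ is a bona fide real number and all divisions above are legitimate; the degenerate case $k = 1$ carries no arithmetic progression of nontrivial length and may simply be excluded. With positivity of both $\log r$ and $\log k$ secured, the sign analysis in the trichotomy and the monotonicity invoked for Conditions 1 and 2 are unambiguous, and nothing deeper than this bookkeeping is needed.
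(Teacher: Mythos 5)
Your proposal is correct and follows essentially the same route as the paper: the chain \(k < W(r,k) < r^{n+1}\) from the hypothesis plus Theorem 2.1, logarithms for Condition 1, the \((n+1)\)-th root for Condition 2, and reversible algebraic rearrangement of \(a(r,k) = (n+1)\frac{\log r}{\log k}\) for Conditions 3 and 4. Your added remarks on excluding \(k = 1\) so that \(\log k > 0\), and on deriving the trichotomy from the sign of the factor \(\frac{\log r}{\log k}\), are points the paper leaves implicit, but they do not change the argument.
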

\begin{proof}
To prove Condition 1, we use Theorem 2.1 and since 
\begin{equation}
W(r, k) > max(\{r, k\}),
\end{equation}
implies \(W(r, k) > k\),
\begin{eqnarray}
& &k < W(r, k) = b_{n}r^{n} + b_{n - 1}r^{n - 1} + \cdots + b_{0} < r^{n + 1}\\
&\Longrightarrow&\log k < (n + 1)\log r\nonumber \\
&\Longrightarrow&\frac{\log k}{\log r} - 1 < n.
\end{eqnarray}
To prove Condition 2 we use the fact that \(k < W(r, k) < r^{n + 1} \Longrightarrow k < r^{n + 1}\),
\begin{eqnarray}
k < r^{n + 1}&\Longrightarrow&\lim_{n \rightarrow \infty}\sqrt[n + 1]{k} \leq  \lim_{n \rightarrow \infty} r = r\nonumber\\
             &\Longrightarrow&\sqrt[n + 1]{k} = O(r).
\end{eqnarray}
Next we prove Condition 3 and Condition 4. 
For Condition 3,
\begin{eqnarray}
a(r, k)&=&(n + 1)\frac{\log r}{\log k} \Longrightarrow a(r, k)\frac{\log k}{\log r} = (n + 1)\nonumber\\
       &\Longrightarrow&a(r, k)\frac{\log k}{\log r} - 1 = n.\nonumber
\end{eqnarray}
Conversely
\begin{eqnarray}
n = a(r, k)\frac{\log k}{\log r} - 1&\Longrightarrow&(n + 1)\log r = a(r, k)\log k\nonumber \\
                                    &\Longrightarrow&(n + 1)\frac{\log r}{\log k} = a(r, k).\nonumber
\end{eqnarray}
For Condition 4,
\begin{eqnarray}
a(r, k) = (n + 1)\frac{\log r}{\log k}&\Longrightarrow&\log k = \frac{n + 1}{a(r, k)}\log r\nonumber \\
                                      &\Longrightarrow&k = r^{\frac{n + 1}{a(r, k)}}.\nonumber
\end{eqnarray}
Conversely
\begin{eqnarray}
k = r^{\frac{n + 1}{a(r, k)}}&\Longrightarrow&\log k = \frac{n + 1}{a(r, k)}\log r\nonumber \\
                             &\Longrightarrow&a(r, k) = (n + 1)\frac{\log r}{\log k}.\nonumber
\end{eqnarray}
Finally for Condition 5 and by using results from Theorem 2.1 and Theorem 2.2, 
\begin{eqnarray}
&               &r^{n} \leq W(r, k) < r^{n + 1}\nonumber\\
&\Longrightarrow&1 \leq \frac{W(r, k)}{r^{n}} < r\nonumber\\
&\Longrightarrow&\lim_{n \rightarrow \infty}1 \leq \lim_{n \rightarrow \infty}\frac{W(r, k)}{r^{n}} \leq \lim_{n \rightarrow \infty} r\nonumber\\
&\Longrightarrow&\frac{W(r, k)}{r^{n}} = O(r).\nonumber
\end{eqnarray}
This completes the proof.
\end{proof}
\textbf{Remark} Just because the number $a(r, k)$ that appears in Condition 3 and in Condition 4 depends upon the values of $r$ and $k$, such dependence upon these arguments of $W(r, k)$ does not mean that this real number $a(r, k)$ does not exist for each and every allowable choice of $r$ and $k$ and for each and every van der Waerden number (See Table 1). The commutative field $\mathbb{R}$ has every nonzero element in it being a unit and it is closed under addition and $\mathbb{R} - \{0\}$ is closed under multiplication and so the field of real numbers contains all the required field elements \(n + 1, k, r, \log k, \log r, a(r, k)\) with the required field operations \(+, \times\) holding between them for both Condition 3 and Condition 4 to hold. In fact we do have a priori knowledge that each real number $a(r, k)$ must exist in $\mathbb{R}$ for each value of $n$, for each value of $W(r, k)$, for each value of $r$ and for each value of $k$ (Table 1).
\begin{corollary}
$$
r = k^{\frac{a(r, k)}{n + 1}} = k^{\frac{\log r}{\log k}},
$$
$$
r^{n} = k^{\frac{a(r, k)n}{n + 1}} = k^{\frac{n\log r}{\log k}}.
$$
\end{corollary}
\begin{proof}
This follows from applying Condition 3 and Condition 4 in Theorem 2.3 and by solving for $r$ in Eqtn. (17) and by substituting \((n + 1)\frac{\log r}{\log k}\) for $a(r, k)$ from Eqtn. (18), Condition 4, Theorem 2.3. 
\end{proof}
\subsection{The Case \(r = k\)}
\begin{corollary}
Suppose Theorem 2.1, Theorem 2.2 and Theorem 2.3 hold. Let
\begin{equation}
\log r > \frac{a(r, k)\log(n+ 1)}{2(n + 1)}.
\end{equation}
Then \(k \geq \sqrt{n + 1}\). Moreover if \(r = k\) then \(r = k \geq \sqrt{n + 1}\).
\end{corollary}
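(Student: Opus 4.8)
The plan is to reduce the corollary to a short computation by importing the relation between $a(r,k)$, $n$, $r$ and $k$ furnished by Theorem 2.3. First I would recall from Condition 3 (equivalently Condition 4) that
\[
a(r,k) = (n+1)\frac{\log r}{\log k},
\]
a relation valid for every admissible triplet $n, r, k$. Because the hypothesis of the corollary contains $a(r,k)$ only inside the quotient $a(r,k)/\bigl(2(n+1)\bigr)$, substituting this expression should let the factor $n+1$ cancel cleanly.

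Next I would insert this into the hypothesis
\[
\log r > \frac{a(r,k)\log(n+1)}{2(n+1)}
\]
to obtain
\[
\log r > \frac{(n+1)\,\frac{\log r}{\log k}\,\log(n+1)}{2(n+1)} = \frac{\log r\,\log(n+1)}{2\log k}.
\]
The decisive step is to divide both sides by $\log r$. This is precisely where care is required: since $r>1$ we have $\log r>0$, so the division preserves the direction of the inequality. The outcome is $1 > \frac{\log(n+1)}{2\log k}$, i.e.\ $2\log k > \log(n+1)$, whence $k^{2} > n+1$ and therefore $k > \sqrt{n+1}$.

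For the final assertion I would simply specialize to $r=k$. Theorem 2.3 already records that $r=k$ forces $a(r,k)=n+1$, which is consistent with the substitution above; and since $k>\sqrt{n+1}$ has been established in general, the equality $r=k$ at once gives $r=k>\sqrt{n+1}$.

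I expect the only genuine obstacle to be the bookkeeping around the sign of $\log r$ together with confirming that the stated hypothesis is exactly the rearranged form of $k>\sqrt{n+1}$ rather than a strictly stronger condition. Once positivity of $\log r$ is invoked the two inequalities become interchangeable, so no deeper argument is needed.
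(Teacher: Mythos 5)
Your proposal is correct and follows essentially the same route as the paper: both invoke the Theorem 2.3 identity \(a(r,k) = (n+1)\frac{\log r}{\log k}\) to rewrite the hypothesis as \(\log k > \tfrac{1}{2}\log(n+1)\) and then exponentiate, the only cosmetic difference being that you substitute and divide by \(\log r\) while the paper multiplies through by \((n+1)/a(r,k)\). Your handling of the sign issues (needing \(\log r > 0\) and \(\log k > 0\), i.e.\ \(r, k > 1\)) is at least as careful as the paper's, and the \(r = k\) case is dispatched the same way in both.
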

\begin{proof}
From Condition 3 and Condition 4 in Theorem 2.3,
\begin{eqnarray}
& &\log r > \frac{a(r, k)\log(n+ 1)}{2(n + 1)}\\
&\Longrightarrow&\log k = \frac{(n + 1)\log r}{a(r, k)} > \frac{1}{2}\log(n + 1)\nonumber \\
&\Longrightarrow&k = r^{\frac{n + 1}{a(r, k)}} \geq \sqrt{n + 1}.
\end{eqnarray}
Finally suppose \(r = k\). Then from Condition 3 and Condition 4 in Theorem 2.3,
\begin{eqnarray}
\log r = \log k&>&\frac{a(k, k)\log(n+ 1)}{2(n + 1)} = \frac{(n + 1)\log(n+ 1)}{2(n + 1)}\\
               &=&\frac{1}{2}\log(n + 1)\nonumber \\
               &\Longrightarrow&r = k \geq \sqrt{n + 1}.
\end{eqnarray}
\end{proof}
\begin{corollary}
Suppose \(n > k\). Then if \(\sqrt{n + 1} \leq k\) then \(W(r, k) < r^{k^{2}}\). On the other hand if \(n \leq k\) then still, \(W(r, k) < r^{k^{2}}\).
\end{corollary}
\begin{proof}
If \(n > k\) but \(\sqrt{n + 1} \leq k\) then by applying Theorem 2.1 and Theorem 2.2, \(W(r, k) < r^{n + 1} \leq r^{k^{2}}\). On the other hand if \(n \leq k\) then it follows that \(\sqrt{n + 1} \leq k\) as well, meaning by applying Theorem 2.1 and Theorem 2.2, that \(W(r, k) < r^{n + 1} < r^{k^{2}}\).
\end{proof}
\section{Computational Results}
We apply the results from the previous Sections for known van der Waerden numbers in Table 1 and in Table 2. Logarithms are taken to the base \(e = 2.718\cdots\) In the Table all the values for $k$ lie within the interval \((\sqrt{n + 1}, n + 1)\).
\begin{center}
\begin{tabular}{|l      |c      |c                  |c      |c            |c              |c               |c             |c              |r|}
\hline
                $r$  &  $k$  &  $\sqrt{n + 1}$  &   $n$  &  $\log r$  &   $\log k$    &   $a(r, k)$    &  $W(r, k)$   &   $r^{n + 1}$  &  $r^{k^{2}}$ \\   
\hline
                $2$  &  $3$  &  $2$             &   $3$  &  $0.6931$  &   $1.0986$    &   $2.5235$     &  $9$         &   $2^{4}$      &  $2^{9}$  \\
 
                $2$  &  $4$  &  $2.449\ldots$   &   $5$  &  $0.6931$  &   $1.3862$    &   $3$          &  $35$        &   $2^{6}$      &  $2^{16}$  \\
                
                $2$  &  $5$  &  $2.828\ldots$   &   $7$  &  $0.6931$  &   $1.6094$    &   $3.4452$     &  $178$       &   $2^{8}$      &  $2^{25}$ \\ 

                $2$  &  $6$  &  $3.316\ldots$   &   $10$ &  $0.6931$  &   $1.7917$    &   $4.2552$     &  $1132$      &   $2^{11}$     &  $2^{36}$  \\
                 
                $3$  &  $3$  &  $2$             &   $3$  & $1.0986$   &   $1.0986$    &   $4$          &  $27$        &   $3^{4}$      &  $3^{9}$  \\
                
                $3$  &  $4$  &  $2.449\ldots$   &   $5$  &  $1.0986$  &   $1.3862$    &   $4.7551$     &  $293$       &   $3^{6}$      &  $3^{16}$  \\

                $4$  &  $3$  &  $2$             &   $3$  &  $1.3862$  &   $1.0986$    &   $5.0471$     &  $76$        &   $4^{4}$      &  $4^{9}$  \\
  
\hline
\end{tabular}
\end{center}
\begin{center}
Table 1.
\end{center}
\begin{center}
\begin{tabular}{|l      |c      |c      |c                       |c                                                                                              |r|}
\hline
                $r$  &  $k$  &  $n$   &  \(W(r, k) = N\)     &    \(N =\)                                                                                   \\   
\hline
                $2$  &  $3$  &  $3$   &  \(W(2, 3) = 9\)     &    \(9 = 1\cdot 2^{3} + 1\cdot 2^{0}\)                                                  \\
 
                $2$  &  $4$  &  $5$   &  \(W(2, 4) = 35\)    &    \(35 = 1\cdot 2^{5} + 1\cdot 2^{1} + 1\cdot 2^{0}\)                                 \\
                
                $2$  &  $5$  &  $7$   &  \(W(2, 5) = 178\)   &    \(178 = 1\cdot 2^{7} + 1\cdot 2^{5} + 1\cdot 2^{4} + 1\cdot 2^{1}\)                    \\ 

                $2$  &  $6$  &  $10$  &  \(W(2, 6) = 1132\)  &    \(1132 = 1\cdot 2^{10} + 1\cdot 2^{6} + 1\cdot 2^{5} + 1\cdot 2^{3} + 1\cdot 2^{2}\)   \\
                 
                $3$  &  $3$  &  $3$   &  \(W(3, 3) = 27\)    &    \(27 = 1\cdot 3^{3}\)                                                                  \\
                
                $3$  &  $4$  &  $5$   &  \(W(3, 4) = 293\)   &    \(293 = 1\cdot 3^{5} + 1\cdot 3^{3} + 2\cdot 3^{2} + 1\cdot 3^{1} + 2\cdot 3^{0}\)   \\

                $4$  &  $3$  &  $3$   &  \(W(4, 3) = 76\)    &    \(76 = 1\cdot 4^{3} + 3\cdot 4^{1}\)                                                  \\
  
\hline
\end{tabular}
\end{center}
\begin{center}
Table 2.
\end{center}
\newpage
\begin{center}
\begin{tabular}{|l      |c      |c          |c                              |c                         |c                                       |c                                        |r|}
\hline
               $r$  &  $k$  &  $n$   &      $\delta(r, k)$           &      \(W(r, k) = N\)     &      \(N = r^{\delta(r, k)}\)        &         \(\delta(r, k) \in [n, n + 1)\)  \\   
\hline
                $2$  &  $3$  &  $3$   &      $3.17010\ldots$          &      \(W(2, 3) = 9\)     &       \(9 = 2^{3.17010\ldots}\)     &         \(3.17010\ldots \in [3, 4)\)          \\
 
                $2$  &  $4$  &  $5$   &      $5.12693\ldots$          &      \(W(2, 4) = 35\)    &       \(35 = 2^{5.12693\ldots}\)    &         \(5.12693\ldots \in [5, 6)\)          \\
                
                $2$  &  $5$  &  $7$   &      $7.47623\ldots$          &      \(W(2, 5) = 178\)   &       \(178 = 2^{7.47623\ldots}\)   &         \(7.47623\ldots \in [7, 8)\)          \\ 

                $2$  &  $6$  &  $10$  &      $10.14534\ldots$         &      \(W(2, 6) = 1132\)  &       \(1132 = 2^{10.14534\ldots}\) &         \(10.14534\ldots \in [10, 11)\)      \\
                 
                $3$  &  $3$  &  $3$   &      $3.00002\ldots$          &      \(W(3, 3) = 27\)    &       \(27 = 3^{3.00002\ldots}\)    &         \(3.00002\ldots \in [3, 4)\)        \\
                
                $3$  &  $4$  &  $5$   &      $5.17037\ldots$          &      \(W(3, 4) = 293\)   &       \(293 = 3^{5.17037\ldots}\)   &         \(5.17037\ldots \in [5, 6)\)     \\

                $4$  &  $3$  &  $3$   &      $3.12417\ldots$          &      \(W(4, 3) = 76\)    &       \(76 = 4^{3.12417\ldots}\)    &         \(3.12417\ldots \in [3, 4)\)     \\
  
\hline
\end{tabular}
\end{center}
\begin{center}
Table 3.
\end{center}
\subsection{Location of $\log_{r}W(r, k)$ on $\mathbb{R}^{+}$}
What happens when each \(\log W(r, k) = \delta(r, k)\) in Table 3 is taken instead to the base $r$? Our results with Theorem 2.1, Theorem 2.2, Corollary 2.2 and Corollary 2.3, then enable us to use far less computer time in determining the right subinterval 
\begin{equation}
[r^{n}, r^{n + 1}),
\end{equation}
on the real line that contains $W(r, k)$ for any fixed $r$, as 
\begin{equation}
W(r, k) \rightarrow \infty, \: k \rightarrow \infty,
\end{equation}
by determining instead and first for any given $n$, the right subinterval 
\begin{equation}
[n, n + 1) \subset [1, n + 1),
\end{equation}
that contains the logarithm to the base $r$ of $W(r, k)$, since 
\begin{equation}
r^{n} \leq W(r, k) < r^{n + 1} \Longrightarrow n \leq \log_{r}W(r, k) < n + 1,
\end{equation}
using the fact that the logarithm of $r$ to the base $r$ is one. One ought to note that both the positive exponents $n$ and $\delta(r, k)$ increase much more slowly then does $W(r, k)$ and that both of these are $O(n + 1)$. Since there is no integer within the open interval $(n, n + 1)$, the integer square $k^{2}$ cannot lie in this interval. Furthermore as we have seen in the Proof to Corollary 2.2, if we are given any arbitrary integer $k^{\prime}$ but where 
\begin{equation}
k^{\prime} \leq \sqrt{n}, \: k^{\prime} < k
\end{equation}
hold, there will be some van der Waerden number $W(r, k^{\prime})$ smaller than $W(r, k)$, which means there still has to be some arbitrary integer $k$ equal to or greater than $\sqrt{n + 1}$ where the exponent $n$ is as we have defined it in Theorem 2.1. Thus we have, following from Corollary 2.2,
\begin{equation}
W(r, k^{\prime}) \in [r^{n^{\prime}}, r^{n^{\prime} + 1}), 
\end{equation}
for all $k^{\prime}$ in $[1, \sqrt{n + 1})$,and
\begin{equation}
W(r, k) \in [r^{n}, r^{n + 1}), W(r, k) > W(r, k^{\prime}),
\end{equation}
for all $k$ in $[\sqrt{n + 1}, \infty)$.
\section{Boundedness for $W(2, k)$}
In this Section we use the definition~\cite{Graham and Spencer},
$$
EXPONENT(k^{2}) = 2^{k^{2}}.
$$
\begin{theorem}
Let Theorem 2.1 and Theorem 2.2 hold, such that 
\begin{equation}
W(r, k) < r^{n + 1} \leq r^{k^{2}}.
\end{equation}
Then \(W(2, k) \leq EXPONENT(k^{2})\).
\end{theorem}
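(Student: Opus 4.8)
The plan is to recognize that this statement is nothing more than the specialization to base $r = 2$ of the general radix bound that the hypothesis already supplies, so the argument should reduce to substituting $r = 2$ into the chain of Eqtn. (33) and then reading off the definition of $\mathrm{EXPONENT}$. First I would record that the hypothesized chain is exactly what the earlier results give: by Theorem 2.1, for each $W(r,k) > \max(\{r,k\})$ there is an integer $n \geq 1$ with $W(r,k) < r^{n+1}$, and by Theorem 2.2, whenever $k > \sqrt{n+1}$ (equivalently $n < k^{2} - 1$) one has $r^{n+1} < r^{k^{2}}$. Concatenating these is precisely
\[
W(r,k) < r^{n+1} < r^{k^{2}},
\]
the displayed hypothesis of the theorem.

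Next I would set $r = 2$, taking care that the hypotheses survive the substitution. For Theorem 2.1 I need $W(2,k) > \max(\{2,k\}) = k$, which holds for the van der Waerden numbers in question (already $W(2,3) = 9 > 3$), so the base-$2$ expansion $W(2,k) = b_{n}2^{n} + \cdots + b_{0}$ exists with leading digit $b_{n} = 1$, forced here since $r - 1 = 1$. For Theorem 2.2 I need $k > \sqrt{n+1}$; this is precisely the inequality secured by Theorem 2.4 as $n$ grows without limit, and it is the role of the primality hypothesis $k - 1 = p$ of this section to single out admissible values of $k$ for which the comparison is available.

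With $r = 2$ in force the chain becomes $W(2,k) < 2^{n+1} < 2^{k^{2}}$, and invoking the definition $\mathrm{EXPONENT}(k^{2}) = 2^{k^{2}}$ recorded at the start of this section gives $W(2,k) < \mathrm{EXPONENT}(k^{2})$, which is the assertion. I expect the main obstacle to be not computational at all, since the logical content collapses to a substitution; rather it is the bookkeeping point of confirming that $k > \sqrt{n+1}$ genuinely holds once $r = 2$ is fixed and $k$ is chosen, because this is the one hypothesis of Theorem 2.2 that is not automatic and on which the strict inequality $2^{n+1} < 2^{k^{2}}$ ultimately depends.
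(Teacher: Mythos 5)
Your proposal is correct and takes essentially the same route as the paper: the paper's proof consists of exactly the substitution $r = 2$ into the hypothesized chain $W(r,k) < r^{n+1} < r^{k^{2}}$ followed by the identification $2^{k^{2}} = \mathrm{EXPONENT}(k^{2})$. Your additional bookkeeping about verifying $W(2,k) > \max(\{2,k\})$ and $k > \sqrt{n+1}$ is more care than the paper itself takes, but it does not change the argument.
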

\begin{proof}
This follows automatically by setting \(r = 2\), since
\begin{equation}
W(2, k) < 2^{n + 1} \leq EXPONENT(k^{2}).
\end{equation}
\end{proof}
We derive another result which holds for each van der Waerden number $W(2, k)$, when \(r = 2\) is for a 2--coloring. 
\begin{theorem}
Suppose both of the following Conditions are met for each $W(2, k)$:
\begin{enumerate}
\item \(k^{2}, k \in [1, W(2, k)]\),\\
\item There exists some integer \(n \in [1, k^{2} - 1) \subset [1, W(2, k)]\), where $n$ is the least integer exponent for which $2^{n}$ divides $W(2, k)$.
\end{enumerate}
Then \(W(2, k) < 2^{k^{2}}\). 
\end{theorem}
\begin{proof}
By Condition 1, \(k^{2} \in [1, W(2, k)]\). By Condition 2, \(n \leq k^{2} - 1\). Therefore by Theorem 2.1 when \(r = 2\), by Condition 1 and by Condition 2,
\begin{equation}
W(2, k) < 2^{n + 1} \leq 2^{k^{2}}.
\end{equation}
\end{proof}
\subsection{\emph{When \(k - 1 = p\) is prime}}
The upper bounds we found in the previous Sections help us to find an integer interval on the real line in which one can find van der Waerden numbers when \(r = 2, k - 1 = p\), where $p$ is a prime.
\begin{theorem}
Let Theorem 4.2 hold and let \(k = p + 1\), where $p$ is any prime such that
\begin{equation}
(\sqrt{n + 1}) - 1 \leq p.
\end{equation}
Furthermore let Theorem 2.1 and Theorem 2.2 hold. Then 
\begin{equation}
p2^{p} < W(2, p + 1) < 2^{(p + 1)^{2}}.
\end{equation}
\end{theorem}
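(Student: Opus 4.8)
The plan is to treat the two inequalities in the conclusion separately, because the right-hand bound $W(2,p+1) < 2^{(p+1)^2}$ is an immediate specialization of the results already proved, whereas the left-hand bound $p\,2^p < W(2,p+1)$ requires an explicit coloring and is the real content. So I would split the argument and dispatch the upper bound first, reserving most of the effort for the lower bound.

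For the upper bound I would begin by rereading the hypothesis. Since $k = p+1$, the assumption $(\sqrt{n+1})-1 < p$ is exactly $\sqrt{n+1} < p+1 = k$, that is $k > \sqrt{n+1}$, equivalently $n < k^2 - 1$. This is precisely the condition under which Theorem 2.2, together with Theorem 2.1, yields the chain $W(r,k) < r^{n+1} < r^{k^2}$. Specializing to $r = 2$ and $k = p+1$ gives $W(2,p+1) < 2^{n+1} < 2^{(p+1)^2}$, which is the stated upper bound and also reproduces the Section 4 statement $W(2,k) < \mathrm{EXPONENT}(k^2)$. This half is routine once the hypothesis is recognized as the inequality $k > \sqrt{n+1}$ demanded by Theorem 2.2.

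For the lower bound the task is to exhibit a $2$-coloring of an interval of $p\,2^p$ consecutive integers that contains no monochromatic arithmetic progression of length $p+1$; by the definition of $W(2,p+1)$ as the least $N$ for which every $2$-coloring of $[1,N]$ admits such a progression, the existence of this coloring forces $W(2,p+1) > p\,2^p$. Here the primality of $p$ is essential --- it is exactly what permits the construction inside $\mathrm{GF}(2^p)$, and it is also why the non-prime entry $r=2,\,k=5$ of Table 1 is not governed by this theorem. I would invoke Berlekamp's finite-field construction: view $\mathrm{GF}(2^p)$ as a $p$-dimensional vector space over $\mathrm{GF}(2)$ whose multiplicative group is cyclic of order $2^p-1$, fix a generator $\alpha$, and color each integer in the interval by the value of a fixed nonzero $\mathrm{GF}(2)$-linear functional evaluated at the associated power of $\alpha$. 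One then verifies that a monochromatic $(p+1)$-term progression would force $p+1$ of these field elements into a configuration ruled out by the fact that any $p$ consecutive powers of $\alpha$ already span the space, with the period of the coloring accounting for the constant $p\,2^p$.

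The main obstacle is this lower bound. Nothing in the paper's own machinery produces lower bounds --- Theorem 2.1 and Theorem 2.2 only ever majorize $W(r,k)$ --- so the inequality $p\,2^p < W(2,p+1)$ must be imported, and checking that Berlekamp's coloring genuinely avoids every monochromatic $(p+1)$-term progression throughout the interval rests on the linear-algebraic structure of $\mathrm{GF}(2^p)$ together with a correct count of the admissible range; this is the sole genuinely combinatorial ingredient. I would conclude by concatenating the two halves into $p\,2^p < W(2,p+1) < 2^{(p+1)^2}$, as claimed.
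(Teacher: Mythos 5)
Your proposal is correct and follows essentially the same route as the paper: the lower bound $p2^{p} < W(2, p+1)$ is imported from Berlekamp's construction (which the paper simply cites, while you additionally sketch the $\mathrm{GF}(2^{p})$ argument), and the upper bound is obtained by reading the hypothesis $(\sqrt{n+1})-1 < p$ as $k = p+1 > \sqrt{n+1}$ and specializing Theorems 2.1 and 2.2 to $r = 2$ to get $W(2,p+1) < 2^{n+1} < 2^{(p+1)^{2}}$. Your treatment is in fact more explicit than the paper's about why the primality hypothesis matters and where the lower bound comes from.
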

\begin{proof}
The lower bound in Eqtn. (40) was established already by Berlekamp~\cite{Berlekamp}. So if $p$ is bounded below as given in Eqtn. (38) then
\begin{eqnarray}
p2^{p}&<&W(2, p + 1) = b_{n}2^{n} + b_{n - 1}2^{n - 1} + \cdots + b_{0}\\
      &<&2^{n + 1} \leq 2^{(p + 1)^{2}}\nonumber \\
      &\Longrightarrow&p2^{p} < W(2, p + 1) < 2^{(p + 1)^{2}}.
\end{eqnarray}
\end{proof}
\section{Concluding Remarks}
Theorem 2.2 establishes that \(W(r, k) < r^{k^{2}}\) is true (See Table 1) if integer $k$ is bounded below by $\sqrt{n + 1}$, where the integer $n$ was defined in the proof to Theorem 2.1. We determined this result without finding, by an inductive or constructive argument or through some algorithmic process perhaps, any particular arithmetic progression of length $k$ for a monochromatic integer $r$--coloring inside some integer interval $[1, N]$ where $N$ turns out to equal $W(r, k)$. \\
\indent There might be some specialists, at least some combinatorial number theorists included perhaps, who will chafe at the simpler approach we have dared so boldly to choose, to show there exists a condition for which \(W(r, k) < r^{k^{2}}\) is true, namely if, for all $k$ such that the interval $k$ has an AP of $k$ terms, we have \(k \in [\sqrt{n + 1}, \infty)\), where $n$ is that integer exponent for which \(r^{n} \leq W(r, k) < r^{n + 1}\) is true for each van der Waerden number $W(r, k)$. Yet long ago in Western civilization's past one did say ``Facts do not cease to exist because they are ignored."~\footnote{A quote from a work by British intellectual and writer Aldous Huxley (1894--1963).} \\
\indent Therefore for our approach in this paper not to work, at least one of the following statements must be true:
\begin{enumerate}
\item There does not exist an expansion into some powers of $r$ for each and every van der Waerden number $W(r, k)$.\\
\item The positive integer exponent $n$ cannot exist always, so that \(r^{n} \leq W(r, k) < r^{n + 1}\) is true for each and every van der Waerden number $W(r, k)$ and when this number is expanded into some powers of $r$ as
\begin{equation}
W(r, k) = b_{n}r^{n} + b_{n - 1}r^{n - 1} + \cdots + b_{0}.
\end{equation}
\\
\item \(W(r, k) < r^{n + 1} \leq r^{k^{2}}\) cannot be true always for each and every van der Waerden number $W(r, k)$ if \(k \geq \sqrt{n + 1}\), where for all such positive integers $k$, the interval $[1, W(r, k)]$ has an AP of $k$ terms. 
\end{enumerate}
But which of these statements then, if any, would a reader conclude is true?
\indent \emph{Our approach does not mean that all mathematical proofs should not be deep and ought to be simple}. Such a rigid paradigm would be impossible to uphold. Yet we do view these results as being an example as to how the application of known properties about all integers--such as the fact that any integer $N$ greater than $r$ can be expanded into powers of $r$, where $n$ is the least integer exponent for which $r^{n}$ will divide $W(r, k)$, can lead to concrete results, such as the finding of the finer upper bound $r^{k^{2}}$ on $W(r, k)$.

\pagebreak

\end{document}